\documentclass[]{amsart}
\usepackage{latexsym,xy}
\xyoption{all}
\usepackage{graphicx,amscd,amssymb,latexsym,subfigure,verbatim,hyperref,epsfig,supertabular}

\newtheorem{defn0}{Definition}[section]
\newtheorem{prop0}[defn0]{Proposition}
\newtheorem{conj0}[defn0]{Conjecture}
\newtheorem{thm0}[defn0]{Theorem}
\newtheorem{lem0}[defn0]{Lemma}
\newtheorem{corollary0}[defn0]{Corollary}
\newtheorem{example0}[defn0]{Example}
\newtheorem{remark0}[defn0]{Remark}
\newtheorem{que0}[defn0]{Question}

\newenvironment{defn}{\begin{defn0}}{\end{defn0}}

\newenvironment{thm}{\begin{thm0}}{\end{thm0}}
\newenvironment{lem}{\begin{lem0}}{\end{lem0}}

\newenvironment{cor}{\begin{corollary0}}{\end{corollary0}}
\newenvironment{exm}{\begin{example0}\rm}{\end{example0}}
\newenvironment{remark}{\begin{remark0}\rm}{\end{remark0}}
\newcommand{\V}{{\bf V }}
\renewcommand{\P}{{\mathbb{P}}}

\newcommand{\OPP}{{\mathcal{O}_{\mathbb{P}^1 \times \mathbb{P}^1}}}
\newcommand{\PP}{{\mathbb{P}^1 \times \mathbb{P}^1}}

\newcommand{\spn}{{\mathrm{Span}}}

\newcommand{\Sing}{{\mathrm{Sing}}}

\newcommand{\Syz}{{\mathrm{Syz}}}

\numberwithin{equation}{section}

\begin{document}
\title[Tensor product surfaces and linear syzygies]%
{Tensor product surfaces and linear syzygies} 

\author{Eliana Duarte}
\address{Department of Mathematics, University of Illinois, 
Urbana, IL 61801}
\email{emduart2@math.uiuc.edu}

\author{Hal Schenck}
\thanks{Schenck supported by NSF 1068754, NSF 1312071}
\address{Department of Mathematics, University of Illinois, 
Urbana, IL 61801}
\email{schenck@math.uiuc.edu}

\subjclass[2001]{Primary 14M25; Secondary 14F17}
\keywords{Tensor product surface, bihomogeneous ideal, syzygy}

\begin{abstract}
Let $U \subseteq H^0(\OPP(a,b))$ be a basepoint free 
four-dimensional vector space, with $a,b \ge 2$.
The sections corresponding to $U$ determine a regular map 
$\phi_U: \PP \longrightarrow \P^3$. We show that there can
be at most one linear syzygy on the associated
bigraded ideal $I_U \subseteq k[s,t;u,v]$. Existence of
a linear syzygy, coupled with the assumption that $U$ is
basepoint free, implies the existence of an additional 
``special pair'' of minimal first syzygies. Using results 
of Botbol \cite{bot}, we show that these three syzygies are 
sufficient to determine the implicit equation of $\phi_U(\PP)$,
and that $\Sing(\phi_U(\PP))$ contains a line.
\end{abstract}
\maketitle

\section{Introduction}
A tensor product surface is the image of a map $\PP \longrightarrow \P^3$.
Such surfaces arise in geometric modeling, and it is often
useful to find the implicit equation for the surface. Standard tools
such as Gr\"obner bases and resultants tend to be slow, and the best
current methods rely on Rees algebra techniques. The use of such methods 
was pioneered by the geometric modeling community (e.g. 
Sederberg-Chen \cite{sc}, Sederberg-Goldman-Du \cite{sgd},  
Sederberg-Saito-Qi-Klimaszewksi \cite{ssqk}, Cox-Goldman-Zhang \cite{cgz}). 
Further work on using Rees algebras in
implicitization appears in Bus\'e-Jouanolou \cite{bj}, 
Bus\'e-Chardin \cite{bc}, Botbol \cite{bot} and 
Botbol-Dickenstein-Dohm \cite{bdd}; see Cox \cite{cox2} for a nice overview.
A key tool is the approximation complex $\mathcal{Z}$, introduced by 
Herzog-Simis-Vasconcelos in \cite{hsv1}, \cite{hsv2}. 
\begin{defn} Let $I = \langle f_1, \ldots, f_n \rangle \subseteq R=k[x_1,\ldots x_m]$, and let $K_i \subseteq \Lambda^i(R^n)$ be the kernel of 
the $i^{th}$ Koszul differential on $\{f_1, \ldots, f_n\}$. 
The approximation complex $\mathcal{Z}$ is a complex 
of $S=R[y_1,\ldots, y_n]$ modules, with $i^{th}$ term $\mathcal{Z}_i = S \otimes_R K_i$, and differential the Koszul differential on $\{y_1, \ldots, y_n\}$.
\end{defn}
It follows from Definition 1.1 that $H_0(\mathcal{Z})$ is the 
symmetric algebra $S_I$ on $I$, and that $K_1$ is $\Syz(I)$. For 
a fixed degree $\mu$, the matrix representing the first 
differential $d^1$ in $\mathcal{Z}$ in degree $\mu$ is obtained 
by rewriting each syzygy on $I$
\[
\sum_{i=1}^n g_i e_i \mbox{ with } \sum_{i=1}^n g_i f_i=0
\]
as $\sum_{i=1}^n g_i y_i$, but in terms of a choice of 
basis for $R_\mu$, so that the entries of $d^1_\mu$ are 
elements of $k[y_1,\ldots, y_n]$. This generalizes to the bigraded setting. Let $R = k[s,t,u,v]$ be a bigraded ring 
over an algebraically closed field $k$, with $s,t$ of 
degree $(1,0)$ and $u,v$ of degree $(0,1)$. Note that the bidegree 
$(a,b)$ graded piece $R_{a,b}$ of $R$ corresponds exactly to the 
global sections $H^0(\OPP(a,b))$.
\begin{defn} 
Suppose $U \subseteq R_{a,b}$ has basis $\{ p_{0}, p_{1}, p_{2}, p_{3} \}$,
such that the $p_i$ have no common zeroes on $\PP$, and let $I_U$ denote 
the ideal 
$\langle  p_{0}, p_{1}, p_{2}, p_{3}\rangle \subset R$. 
Since the $p_i$ have no common zeroes on $\PP$, they define a regular map 
$\phi_U: \PP \longrightarrow \P^3$, and we write $X_U$ for 
$\phi_U(\PP) \subseteq \P^3.$ 
\end{defn}
The assumption that $U$ is basepoint free means that 
$\sqrt{I_U} = \langle s,t\rangle \cap \langle u,v \rangle.$
In this setting, work of \cite{bdd} gives conditions on $\mu$ so
that the determinant of $d^1_\mu$ is a power of the implicit equation 
for $X_U$. Motivated by \cite{cds}, in \cite{ssv}, 
Schenck-Seceleanu-Validashti show that for 
tensor product surfaces of bidegree $(2,1)$, the existence of a linear
syzygy on $I_U$ imposes very strong conditions on $X_U$. We show 
this is not specific to the bidegree $(2,1)$ case. Our main result is:
\vskip .05in
\noindent{\bf Theorem}: If $a,b \ge 2$ and $U$ is basepoint free, then there
is at most one linear first syzygy on $I_U$. A linear first syzygy 
gives rise to a special pair of additional first syzygies.
These three syzygies determine the degree $(2a-1,b-1)$ component of 
the approximation complex $\mathcal{Z}$. By \cite{bot}, the determinant of the 
resulting square matrix is a power of the implicit equation of $X_U$. 

\begin{exm}\label{ex1}
Suppose $(a,b)=(2,2)$, and  
\[
U = \spn \{t^2u^2+s^2uv, t^2uv+s^2v^2, t^2v^2, s^2u^2 \} \subseteq H^0(\OPP(2,2)),
\]
which has a first syzygy of bidegree $(0,1)$. A computation shows that $I_U$ has 
seven minimal first syzygies, in bidegrees 
\[
(0,1),(2,1),(2,1),(0,3),(2,2),(4,1),(6,0).
\]
By Theorem~\ref{EXTRA}, the three syzygies of bidegree $(0,1),(2,1),(2,1)$ are generated by the columns of 
\[
\left[\begin{matrix}
v &0 &s^2u \\
-u & -t^2v &0 \\
 0 & t^2u+s^2v & 0\\
0 & 0 & -t^2u-s^2v 
\end{matrix}
 \right],
\]
and the bidegree $(2a-1,b-1) = (3,1)$ component of the first differential
in the approximation complex is 
\[
\left[
\begin{matrix}
x_0&0&0&0   &x_2&0&-x_3&0\\
-x_1&0&0&0  &0&0&x_0&0\\
0&x_0&0&0   &0&x_2&0&-x_3\\
0&-x_1&0&0  &0&0&0 &x_0\\
0&0&x_0&0   &-x_1&0&0&0\\
0&0&-x_1&0  &x_2&0&-x_3&0\\
0&0&0&x_0   &0&-x_1&0&0\\
0&0&0&-x_1  &0&x_2&0&-x_3
\end{matrix}
\right]
\]
The determinant of this matrix is 
\[
(x_0^3x_2+x_1^3x_3-x_0^2x_1^2)^2.
\]
By Corollary~\ref{EQN} this means the implicit equation defining $X_U$ is 
$x_0^3x_2+x_1^3x_3-x_0^2x_1^2$, and $\phi_U$ is $2:1$ by Lemma~\ref{BLEM2}. 
By Corollary~\ref{SING} the codimension one singular locus 
of $X_U$ contains $\V(x_0,x_1)$; in fact, 
in this case equality holds.
\end{exm}
\subsection{Algebraic tools}
Two results from previous work will be especially useful; for additional background on approximation complexes and bigraded commutative algebra, see \cite{ssv}.
\begin{lem}\label{LS1}\cite{ssv}
If $I_U$ has a linear first syzygy of bidegree $(0,1)$, then 
\[
I_U = \langle pu, pv, p_2,p_3 \rangle,
\]
where $p$ is homogeneous of bidegree $(a,b-1)$.
\end{lem}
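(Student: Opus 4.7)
The plan is to normalize the given linear syzygy via a $GL(U)$-change of basis and then extract a common factor using unique factorization in $R = k[s,t,u,v]$.

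First I would write the hypothesized $(0,1)$-syzygy as $g_0 p_0 + g_1 p_1 + g_2 p_2 + g_3 p_3 = 0$ with each $g_i = \alpha_i u + \beta_i v \in R_{0,1}$, and encode the scalar data in a $4\times 2$ matrix $G$ over $k$ whose $i$th row is $(\alpha_i,\beta_i)$. A change of basis $p_i \mapsto \sum_j M_{ij} p_j$ with $M \in GL_4(k)$ transforms the coefficient vector by $G \mapsto M^{-T} G$, so invertible row operations on $G$ correspond to changes of basis of $U$; since $I_U$ depends only on $U$ and not on the chosen basis, it suffices to prove the claim after any such row reduction.

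Next I would argue $\rank G = 2$. If instead $\rank G \le 1$, then every $g_i$ equals $c_i \ell$ for a fixed $\ell \in R_{0,1}$ and scalars $c_i \in k$, and the syzygy collapses to $\ell \cdot \sum c_i p_i = 0$; since $R$ is a domain this forces $\sum c_i p_i = 0$, contradicting the linear independence of $p_0,\ldots,p_3$. Row-reducing the rank-$2$ matrix $G$ so that its rows become $(1,0),(0,1),(0,0),(0,0)$ and renaming the resulting basis of $U$, the syzygy takes the standard form $u p_0 + v p_1 = 0$.

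Finally, $u p_0 = -v p_1$ holds in the UFD $R$, and since $u$ is a prime that does not divide $v$, we get $u \mid p_1$. Writing $p_1 = u p$ with $p \in R_{a,b-1}$ yields $p_0 = -v p$, and after absorbing the sign into the basis we obtain $I_U = \langle pu, pv, p_2, p_3 \rangle$ as claimed. The only point requiring any thought is the rank argument, which reduces to the linear independence of the $p_i$; everything else is routine linear algebra and divisibility in a UFD.
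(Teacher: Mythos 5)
Your argument is correct, and it is essentially the expected proof: normalize the $4\times 2$ scalar coefficient matrix of the $(0,1)$-syzygy by a change of basis of $U$, observe that basepoint-freeness is not even needed and that linear independence of the $p_i$ forces the matrix to have rank exactly two, reduce the syzygy to $up_0 + vp_1 = 0$, and extract the common factor $p$ of bidegree $(a,b-1)$ using that $u$ is prime in the UFD $R$. Note, however, that the paper does not prove this lemma itself --- it simply cites \cite{ssv} --- so there is no in-paper proof to compare against; your write-up supplies a self-contained argument of the kind one would expect to find there.

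One small point worth tightening: you phrase the rank argument as ``if $\rank G \le 1$,'' but the rank-$0$ case (all $g_i = 0$) is the zero syzygy, for which your conclusion $\sum c_i p_i = 0$ holds vacuously with all $c_i = 0$ and yields no contradiction. You should either state explicitly that a minimal first syzygy is by definition nonzero (ruling out rank $0$ a priori) or separate the cases: rank $0$ is excluded because the syzygy is nontrivial, and rank $1$ leads to the contradiction you describe. This is a bookkeeping matter, not a real gap, but the phrasing as written momentarily asserts something false.
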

A similar result holds if $I_U$ has a first syzygy of
degree $(1,0)$. The lemmas below (Lemmas 7.3 and 7.4 of Botbol \cite{bot}) also play
a key role. Botbol notes that the local cohomology module $(H_2)_{4a-1,3b-1}$ has 
dimension equal to the sum of the multiplicities at the basepoints, so if 
$U$ is basepoint free, this module vanishes.
\begin{lem}\label{BLEM1}\cite{bot}
Suppose $a\le b$. If $\nu = (2a-1,b-1)$, then the determinant of the $\nu$ strand of the
approximation complex is of degree $2ab-\dim(H_2)_{4a-1,3b-1}$.
\end{lem}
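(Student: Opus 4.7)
My plan is to interpret $\det(d^1_\nu)$ as a MacRae-style invariant of the $\nu$-strand of $\mathcal{Z}$, and compute its degree via an Euler characteristic calculation with a correction term coming from local cohomology. First I would check that $d^1_\nu$ is represented by a square matrix over $k[y_0,\ldots,y_3]$. Each $(\mathcal{Z}_i)_\nu$ is a free $k[y_0,\ldots,y_3]$-module of rank $\dim_k (K_i)_\nu$, and a direct rank count using the Koszul complex on $\{p_0,p_1,p_2,p_3\}$ in bidegree $\nu=(2a-1,b-1)$ on $\PP$ (where $\dim_k R_\nu = 2a\cdot b = 2ab$) shows that $\dim_k (K_1)_\nu = \dim_k R_\nu$, so the matrix is indeed square of size $2ab \times 2ab$.

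Next I would invoke the general principle that for a complex of free $k[y_0,\ldots,y_3]$-modules which is generically exact, the MacRae invariant yields
\[
\deg \det(d^1_\nu) \;=\; \sum_{i\ge 0}(-1)^i \dim_k H_i(\mathcal{Z})_\nu .
\]
The basepoint-free hypothesis on $U$ ensures that the contribution from $H_0(\mathcal{Z})_\nu$ is exactly the implicit equation factor, accounting for the main term $2ab$, while $H_1(\mathcal{Z})_\nu$ vanishes by a standard depth argument on the Koszul homology (the sequence $\{p_0,\ldots,p_3\}$ is a regular sequence away from the two pencil-lines $\V(s,t)$ and $\V(u,v)$). The remaining piece is $H_2(\mathcal{Z})_\nu$, which one would identify, via bigraded local duality on $R$ with respect to the irrelevant ideal $\langle s,t\rangle \cap \langle u,v\rangle$, with $(H_2)_{4a-1,3b-1}$; the precise shift $(4a-1,3b-1)$ is dictated by the dualizing module of $\PP$, namely $\OPP(-2,-2)$, applied to the ambient shift $(2a-1,b-1)+(2a,2b) -(1,1)$.

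Assembling these pieces gives
\[
\deg \det(d^1_\nu) \;=\; 2ab - \dim_k (H_2)_{4a-1,3b-1},
\]
which is the claim. The main technical obstacle is the local cohomology computation in that exact degree: one must carefully match the duality shift on $\PP$ with $\nu$, and justify the vanishings of $H_i(\mathcal{Z})_\nu$ for $i \ne 2$ and of $(H_2)_\mu$ for $\mu \ne (4a-1,3b-1)$ along the $\nu$-strand. For this I would follow Botbol's strategy of comparing the approximation complex with the Koszul complex via the two spectral sequences of the double complex obtained by tensoring $\mathcal{Z}$ with a \v Cech complex on the irrelevant ideal; the hypothesis $a \le b$ is what makes the degree $(2a-1,b-1)$ fall inside the range where only the single term $(H_2)_{4a-1,3b-1}$ survives.
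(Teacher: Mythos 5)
The paper does not prove this lemma; it is quoted directly from Botbol \cite{bot} (Lemmas~7.3 and 7.4 there), with only a one-sentence gloss in the preceding paragraph explaining that $(H_2)_{4a-1,3b-1}$ measures basepoint multiplicities. So your attempt is really a sketch of Botbol's proof, and it should be judged as such.

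Your broad outline is the right one -- interpret the determinant of the strand as a MacRae-type invariant, compute its degree by an Euler-characteristic identity over the strand, and control the correction term by local cohomology/duality -- but three of the load-bearing steps are not actually established. First, the shift arithmetic is off: $(2a-1,b-1)+(2a,2b)-(1,1)=(4a-2,3b-2)$, not $(4a-1,3b-1)$. The index $(4a-1,3b-1)$ is simply $\nu+2(a,b)$, the twist coming from $K_2\subseteq\Lambda^2\bigl(R(-a,-b)^4\bigr)\cong R(-2a,-2b)^6$; the dualizing shift $(-2,-2)$ of $\PP$ is not what produces it. Second, the claim that $d^1_\nu$ is square by ``a direct rank count'' is not a rank count at all: the number of columns is $\dim_k\Syz(I_U)_\nu$, and equating this with $\dim_k R_\nu=2ab$ amounts to showing $I_U$ fills all of $R_{(3a-1,2b-1)}$, i.e.\ that $(I_U)_{(3a-1,2b-1)}=R_{(3a-1,2b-1)}$. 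That is a regularity/saturation statement that needs the basepoint-free hypothesis and the bound $\nu=(2a-1,b-1)$; it is one of the things Botbol proves, not an input. Third, ``a standard depth argument'' for $H_1(\mathcal{Z})_\nu=0$ and the vanishing of $(H_2)_\mu$ for $\mu\neq(4a-1,3b-1)$ are precisely where the two-spectral-sequence comparison and the hypothesis $a\le b$ do all the work; you defer these to a closing remark, but in Botbol's argument they are the proof, not a technicality to be cleaned up afterward. For the purposes of the present paper it is enough to cite the result; if you want to reproduce the proof you need to actually run the \v Cech--Koszul double complex and track the multigraded vanishing ranges explicitly.
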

\begin{lem}\label{BLEM2}\cite{bot}
If $U$ has basepoints with multiplicities $e_x$, then
\[
\deg(\phi_U)\deg(F) = 2ab-\sum e_x, \mbox{ where } \langle F \rangle=I(X_U).
\]
\end{lem}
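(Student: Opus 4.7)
The plan is to express $\deg(\phi_U)\deg(F)$ as the cardinality of the preimage of a generic line $L \subset \P^3$ under $\phi_U$, then compute that preimage via intersection theory on $\PP$, isolating the contribution coming from the basepoint scheme.

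First, pick two general linear forms $\ell_1,\ell_2 \in H^0(\P^3,\Oc_{\P^3}(1))$ cutting out a line $L \subset \P^3$. By genericity, I may arrange that $L$ misses the (finite) image of the basepoint locus, meets $X_U$ transversally in $\deg(F)$ smooth points of $X_U$, and that each such point has a reduced fiber of cardinality $\deg(\phi_U)$ under $\phi_U$. The pullbacks $F_i := \ell_i(p_0,p_1,p_2,p_3) \in U$ are then generic members of $U$, and cut out divisors $D_i$ on $\PP$ of class $(a,b)$.

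Next, I would compute $D_1 \cdot D_2$ in two ways. On the one hand, the intersection form on $\PP$ gives $(a,b)\cdot(a,b)=2ab$. On the other hand, scheme-theoretically $D_1 \cap D_2$ splits into a contribution supported on the basepoint scheme $\V(I_U)$ and a \emph{moving} part $\phi_U^{-1}(L)$ lying in the complement. By genericity, the moving part is reduced, and its cardinality factors as $\deg(\phi_U)\cdot\deg(F)$, since each of the $\deg(F)$ points of $L \cap X_U$ has exactly $\deg(\phi_U)$ preimages. At each basepoint $x$, the local length $\dim_k \Oc_{\PP,x}/(F_1,F_2)$ is by definition the basepoint multiplicity $e_x$ of $U$ at $x$. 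Summing gives $2ab = \sum e_x + \deg(\phi_U)\deg(F)$, which rearranges to the stated formula.

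The principal obstacle is the third step: justifying that for generic $F_1,F_2 \in U$, the local intersection multiplicity $\dim_k \Oc_{\PP,x}/(F_1,F_2)$ at a basepoint $x$ coincides with the integer $e_x$ appearing in Botbol's dimension formula for $(H_2)_{4a-1,3b-1}$. One verifies this by identifying both quantities with the local colength at $x$ of the saturation of $I_U$ (equivalently, with the length at $x$ of the special fiber of the Rees algebra of $I_U$); a Bertini-type argument then guarantees that $D_1$ and $D_2$ meet transversally away from the basepoint scheme, so that no extra multiplicity is picked up in the moving part and the two contributions to the intersection number are genuinely disjoint.
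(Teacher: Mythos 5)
The paper does not prove this lemma at all; it is quoted directly from Botbol \cite{bot} (his Lemma 7.4) and used as a black box, so there is no paper-internal argument to compare against. Your proposal is the classical degree-formula argument: pull back a generic line $L\subset\P^3$ to two generic divisors $D_1,D_2$ in the linear system $|U|$, compute $D_1\cdot D_2=2ab$ on $\PP$, and split the length of the intersection scheme into a part concentrated on the base locus and a ``moving'' part of size $\deg(\phi_U)\deg(F)$. This is correct in outline and is essentially how such formulas are derived in the literature.

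The place where you slip is the final paragraph. You propose to identify the local length $\dim_k \Oc_{\PP,x}/(F_1,F_2)$ with ``the local colength at $x$ of the saturation of $I_U$,'' and assert this is equivalent to a length of the special fiber of the Rees algebra. Neither identification is right in general. For two generic members $F_1,F_2\in U$, the ideal $(F_1,F_2)\Oc_{\PP,x}$ is a minimal reduction of $I_{U,x}$ (since $k$ is infinite and $\Oc_{\PP,x}$ has dimension two), and because $\Oc_{\PP,x}$ is Cohen--Macaulay one gets
\[
\dim_k \Oc_{\PP,x}/(F_1,F_2) \;=\; e(I_{U,x}),
\]
the Hilbert--Samuel multiplicity of $I_{U,x}$. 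This is in general strictly larger than the colength $\ell\bigl(\Oc_{\PP,x}/I_{U,x}\bigr)$, with equality precisely when the basepoint is a local complete intersection; and the special fiber ring $\bigoplus_n I^n_x/\mm_x I^n_x$ is not of finite length, so ``length of the special fiber'' is not a well-posed invariant here. The $e_x$ in Botbol's statement is this Hilbert--Samuel multiplicity, so your computation does match once you replace ``colength of the saturation'' by ``multiplicity $e(I_{U,x})$'' and drop the claimed equivalences. For the paper's purposes the distinction is harmless, since the lemma is invoked only when $U$ is basepoint free, i.e.\ when $\sum e_x=0$.
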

\noindent If $U$ is basepoint free, the determinant of the $\nu$ strand is the determinant of $(d^1)_\nu$.
\section{Proofs of main theorems}
\begin{thm}\label{MAIN}
 If $a,b \ge 2$ and $U$ is basepoint free, then there
can be at most one linear first syzygy on $I_U$.
\end{thm}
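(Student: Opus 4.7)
The plan is to argue that a second linear first syzygy on $I_U$, of either bidegree $(0,1)$ or $(1,0)$, is obstructed by a cohomology vanishing on the curve $C := V(p) \subseteq \PP$ that holds whenever $a,b \ge 2$ and $U$ is basepoint free. Without loss of generality I assume the first linear syzygy has bidegree $(0,1)$; the $(1,0)$ case is entirely symmetric. By Lemma~\ref{LS1}, we may take $I_U = \langle pu, pv, p_2, p_3 \rangle$ with $p \in R_{a,b-1}$, so that the tautological Koszul relation $v(pu) - u(pv) = 0$ accounts for the given $(0,1)$-syzygy. Rewriting any further linear syzygy
\[
\ell_0(pu) + \ell_1(pv) + \ell_2 p_2 + \ell_3 p_3 = 0, \qquad \ell_i \in R_\delta,\ \delta \in \{(0,1),(1,0)\},
\]
as $\ell_2 p_2 + \ell_3 p_3 = -p(\ell_0 u + \ell_1 v)$ and tracking the map $(\ell_0,\ell_1) \mapsto \ell_0 u + \ell_1 v$ in each bidegree, one sees that, modulo the Koszul syzygy above, such syzygies are parameterized by nonzero pairs $(\ell_2,\ell_3) \in R_\delta^2$ satisfying $\ell_2 p_2 + \ell_3 p_3 \in (p)$. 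The goal is to rule these out.

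I would then pass to the curve $C = V(p)$. Basepoint freeness of $U$ forces $V(p,p_2,p_3) = \emptyset$ on $\PP$, so $p_2, p_3$ have no common zero on $C$ and generate $\mathcal{O}_C(a,b)$; the Koszul sequence
\[
0 \longrightarrow \mathcal{O}_C(-a,-b) \longrightarrow \mathcal{O}_C^{\,2} \longrightarrow \mathcal{O}_C(a,b) \longrightarrow 0
\]
is then exact. Twisting by $\mathcal{O}_C(\delta)$ and taking cohomology identifies the kernel of the multiplication map $H^0(C,\mathcal{O}_C(\delta))^{\,2} \to H^0(C,\mathcal{O}_C(\delta+(a,b)))$ with $H^0(C,\mathcal{O}_C(\delta-(a,b)))$. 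Combined with injectivity of the restriction $R_\delta \hookrightarrow H^0(C,\mathcal{O}_C(\delta))$, vanishing of this $H^0$ will force $(\ell_2,\ell_3) = 0$ and close the argument.

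Both vanishings would then be computed by pulling back along the ideal sheaf sequence $0 \to \mathcal{O}_\PP(-a,1-b) \to \mathcal{O}_\PP \to \mathcal{O}_C \to 0$, appropriately twisted, and applying K\"unneth on $\PP = \Pone \times \Pone$. The relevant line bundles on $\PP$ are $\mathcal{O}(-a,1-b)$ and $\mathcal{O}(-2a,2-2b)$ in the $\delta=(0,1)$ branch, and $\mathcal{O}(1-a,-b)$ and $\mathcal{O}(1-2a,1-2b)$ in the $\delta=(1,0)$ branch; for $a,b \ge 2$ each has both bidegree entries strictly negative, so K\"unneth gives $H^0 = H^1 = 0$ on $\PP$, and the long exact sequence then delivers $H^0(C,\mathcal{O}_C(\delta-(a,b))) = 0$.

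The main obstacle I anticipate is the initial reduction: one must verify that a single condition ``$(\ell_2,\ell_3) \ne 0$ with $\ell_2 p_2 + \ell_3 p_3 \in (p)$'' in the appropriate bidegree parameterizes both the potential additional $(0,1)$-syzygies (beyond the Koszul one) and the entire space of $(1,0)$-syzygies, so that one cohomology computation on $C$ simultaneously handles the repeated-bidegree case and the mixed case. Once that reduction is in place, the K\"unneth bookkeeping that drives the vanishing is essentially automatic under $a,b \ge 2$.
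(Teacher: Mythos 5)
Your proposal is correct, and it takes a genuinely different route from the paper's proof. Both arguments begin from Lemma~\ref{LS1}, writing $I_U = \langle pu, pv, p_2, p_3\rangle$ and observing that $U$ basepoint free forces $\V(p, p_2, p_3) = \emptyset$ on $\PP$. From there the paths diverge. The paper proceeds by an explicit coefficient computation: it writes the putative second linear syzygy as $\sum p_i(a_iu+b_iv)=0$, produces a $q$ with $(\widetilde{p_2},\widetilde{p_3})=(qv,-qu)$, and then runs a case analysis on the $2\times2$ determinant $D$, in each case exhibiting a common factor of $p$ and another form whose intersection count (via Hartshorne V.1.4.3) forces basepoints. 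Your approach instead projects the syzygy to the pair $(\ell_2,\ell_3)$, observes that the condition $\ell_2 p_2 + \ell_3 p_3 \in (p)$ becomes a Koszul relation among $p_2|_C, p_3|_C$ on the Cartier divisor $C=\V(p)$, and kills it with a cohomology vanishing: the Koszul sequence $0 \to \mathcal{O}_C(-a,-b) \to \mathcal{O}_C^2 \to \mathcal{O}_C(a,b) \to 0$ is exact because $p_2,p_3$ have no common zero on the one-dimensional Cohen--Macaulay scheme $C$, and then the ideal-sheaf sequence for $C \subset \PP$ plus K\"unneth shows $H^0(\mathcal{O}_C(\delta-(a,b)))=0$ and $R_\delta \hookrightarrow H^0(\mathcal{O}_C(\delta))$ for $\delta\in\{(0,1),(1,0)\}$ once $a,b\ge 2$.

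The reduction you flag as the main obstacle does go through: for $\delta=(0,1)$, the map $R_{0,1}^2 \to R_{0,2}$, $(\ell_0,\ell_1)\mapsto \ell_0 u+\ell_1 v$, is surjective with kernel spanned by $(v,-u)$, so syzygies with $(\ell_2,\ell_3)=0$ are exactly the scalar multiples of the Koszul syzygy $L$; for $\delta=(1,0)$, $(\ell_0,\ell_1)\mapsto\ell_0 u+\ell_1 v$ is injective on $R_{1,0}^2$, so $(\ell_2,\ell_3)=0$ forces the syzygy to vanish. Thus any second minimal linear syzygy, in either bidegree, produces a nonzero $(\ell_2,\ell_3)\in R_\delta^2$ with $\ell_2 p_2+\ell_3 p_3\in(p)$, and your cohomological vanishing rules this out. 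The case in which both linear syzygies have bidegree $(1,0)$ is handled by the $s\leftrightarrow u$, $t\leftrightarrow v$ symmetry, as you note.

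What each approach buys: yours is more uniform (one cohomology vanishing handles the repeated-bidegree and mixed-bidegree cases at once, with no subcases on $D$ or on divisibility) and arguably more conceptual, at the cost of invoking sheaf cohomology, exactness of the Koszul complex on the divisor $C$, and K\"unneth. The paper's argument is elementary and self-contained, relying only on the generating syzygy of $[u,v]$ and a bidegree intersection count, but requires the somewhat delicate branching on $D$. One small caution in your write-up: in the $\delta=(0,1)$ injectivity step the kernel twist is $\mathcal{O}_\PP(-a,2-b)$, whose second entry can be $0$ when $b=2$; the needed $H^0$-vanishing still holds because the first entry $-a$ is negative, but the phrase ``both bidegree entries strictly negative'' should be restricted to the bundles entering the $H^0(\mathcal{O}_C(\delta-(a,b)))$ computation, where it is accurate.
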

\begin{proof}
Suppose $L$ is a linear syzygy of bidegree $(0,1)$ on $I_U$. By Lemma~\ref{LS1}, we may assume 
\[
I_U = \langle pu, pv, p_2,p_3 \rangle =\langle p_0, p_1, p_2,p_3 \rangle,
\]
where $p$ is homogeneous of bidegree $(a,b-1)$. Suppose there is another minimal first linear syzygy of bidegree $(0,1)$ 
\[
\sum\limits_{i=0}^3 p_i\cdot(a_iu+b_iv) = 0.
\]
Let
\[
\begin{array}{ccc}
\widetilde{p_2} &= &\sum a_ip_i\\
\widetilde{p_3} &= & \sum b_ip_i,
\end{array}
\]
so $\widetilde{p_2}u+\widetilde{p_3}v = 0$. But the syzygy module on $[u,v]$ is
generated by $[v,-u]$, so we must have $\widetilde{p_2}=qv, \widetilde{p_3}=-qu$ for
some $q$ of bidegree $(a,b-1)$. If in addition  
\[
D = \det \left[\begin{matrix}
a_2 & a_3 \\
b_2 & b_3
\end{matrix}
 \right]\mbox{ is nonzero, then }
\] 
\[
I_U = \langle pu,pv, \widetilde{p_2},\widetilde{p_3} \rangle = \langle pu, pv, qu, qv \rangle.
\]
Example V.1.4.3 of \cite{h} shows that curves $\V(f)$ of bidegree $(a,b)$ and 
$\V(g)$ of bidegree $(c,d)$ on $\PP$ sharing no common component 
meet in $ad+bc$ points. If $p$ and $q$ share a common factor, then
clearly $I_U$ is not basepoint free; if they do not share a common
factor, then $\V(p,q)$ consists of $2ab-2a$ points; since $a,b \ge 2$,
this again forces $I_U$ to have basepoints. The same argument works
if the additional syzygy is of bidegree $(1,0)$, save that in this
case since $q$ is of degree $(a-1,b)$, $\V(p,q)$ consists of $2ab-a-b+1$ 
points, and again $I_U$ is not basepoint free.

Next, suppose $D=0$. If $a_2=a_3=b_2=b_3=0$, then 
the second minimal first syzygy involves only $pu$ and $pv$. If the
syzygy is of bidegree $(0,1)$, then by Lemma~\ref{LS1}, 
$(pu,pv)=(qv,qu)$. Thus
\[
pu=qv \Longrightarrow p=fv, q=fu \Longrightarrow fv^2=fu^2,
\]
a contradiction. If the syzygy is of bidegree 
$(1,0)$, then $(pu,pv)=(qs,qt)$, and 
\[
pu=qs \Longrightarrow p=fs, q=fu \Longrightarrow fsv=fut,
\]
again a contradiction.

Finally, if $D=0$ and $a_2,a_3,b_2,b_3$ are not all zero, then $c\cdot[a_2,b_2] = [a_3,b_3]$
for some $c \ne 0$, so letting $\widetilde{p_2} = p_2+cp_3$, we may assume the
syzygy involves only $pu,pv, \widetilde{p_2}$. If the syzygy is of degree 
$(0,1)$, letting $l_i=a_iu+b_iv$ for $i=0,1,2$, we have 
\[
pul_0+ pvl_1+ \widetilde{p_2}l_2=0.
\]
Since $\langle l_2 \rangle$ is prime, either $l_2 | ul_0+ vl_1$ or
$l_2 | p$. In the former case, $ul_0+ vl_1 = l_2l_3$ for some $l_3
\in k[u,v]_1$ hence $pl_3 + \widetilde{p_2}=0$. In particular 
$p | \widetilde{p_2}$, so $\V(p,p_3)$ contains $2ab-a$ points 
and $I_U$ is not basepoint free. 

In the latter case, 
$p'l_2 =p$ for some $p' \in R_{(a-2,b)}$, 
so $p'l_2ul_0+ p'l_2vl_1+ \widetilde{p_2}l_2=0$.
Hence $p'ul_0+ p'vl_1+ \widetilde{p_2}=0$, so $p'$ is a common factor of 
$p$ and $\widetilde{p_2}$ of bidegree $(a,b-2)$, so $\V(p',p_3)$ 
contains $2ab-2a$ points and $I_U$ is not basepoint free. A similar  
argument works if the additional syzygy is of bidegree $(1,0)$.
\end{proof}
\begin{thm}\label{EXTRA}
If $U$ is basepoint free, $a,b \ge 2$ and there is a linear syzygy
$L$ of bidegree $(0,1)$ on $I_U$, then there are two additional first 
syzygies $S_1,S_2$ of bidegree $(a,b-1)$, such that
\[
\dim  \langle L,S_1,S_2\rangle_{(2a-1,b-1)} = 2ab.
\]
\end{thm}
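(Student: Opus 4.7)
The plan is to apply Lemma~\ref{LS1} to put $I_U$ in the form $\langle pu, pv, p_2, p_3\rangle$ with $p \in R_{a, b-1}$, so the linear syzygy is $L = (v, -u, 0, 0)$. To exhibit the additional syzygies, I would use the fact that since $b \ge 2$, every monomial of $R_{a, b}$ is divisible by $u$ or by $v$, hence each of $p_2, p_3$ admits a decomposition $p_j = u a_j + v b_j$ with $a_j, b_j \in R_{a, b-1}$. The identity $-a_j(pu) - b_j(pv) + p \cdot p_j = p(p_j - u a_j - v b_j) = 0$ then immediately produces the two syzygies
\[
S_1 = (-a_2, -b_2, p, 0), \qquad S_2 = (-a_3, -b_3, 0, p)
\]
of bidegree $(a, b-1)$. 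The decomposition $p_j = ua_j + vb_j$ is not unique, but the ambiguity only alters $S_j$ by a multiple of $L$, so the submodule $\langle L, S_1, S_2\rangle$ is well-defined.

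For the dimension assertion, I would describe $\langle L, S_1, S_2\rangle_{(2a-1, b-1)}$ explicitly as $R_{2a-1, b-2} \cdot L + R_{a-1, 0} \cdot S_1 + R_{a-1, 0} \cdot S_2$. Since $\dim R_{2a-1, b-2} = 2a(b-1)$ and $\dim R_{a-1, 0} = a$, the sum of the summand dimensions is $2ab$, which gives an a priori upper bound. To show equality I would verify linear independence directly: the third and fourth coordinates of a vanishing combination $gL + h_1 S_1 + h_2 S_2 = 0$ force $h_1 p = h_2 p = 0$, hence $h_1 = h_2 = 0$ since $R$ is a domain, and then the first two coordinates force $g = 0$.

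The one real idea is the decomposition $p_j = ua_j + vb_j$, which simultaneously produces the syzygies $S_1, S_2$ and ensures their independence from shifts of $L$ by placing $p$ in their last two coordinates. After that insight the remainder is a straightforward dimension count, so I anticipate no serious obstacle; any difficulty would come from handling the symmetric case where the hypothesized linear syzygy has bidegree $(1,0)$ instead of $(0,1)$, but that case runs through the same argument with the roles of $\{s,t\}$ and $\{u,v\}$ exchanged.
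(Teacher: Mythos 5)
Your proposal is correct and follows essentially the same route as the paper: the decomposition $p_j = u a_j + v b_j$ (written $p_j = f_j u + g_j v$ in the paper) is exactly the paper's device for producing $S_1, S_2$, and your dimension count $2a(b-1) + a + a = 2ab$ is the same computation. Your coordinate-by-coordinate independence argument is the same observation the paper packages as the bottom $3\times 3$ submatrix of $N$ being upper triangular; the paper additionally confirms via Buchsbaum--Eisenbud that $M$ is the full syzygy matrix on $[pu,pv,p_2]$, but that verification is not needed for the stated dimension claim.
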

\begin{proof}
By Lemma~\ref{LS1} we may assume $(p_0,p_1)=(pu,pv)$. Write 
$p_2 = g_2v+f_2u$. Then $f_2p_0+g_2p_1-pp_2 =0$, so the kernel of 
$[pu, pv, p_2]$ contains the columns of the matrix 
\[
M = \left[\begin{matrix}
v & f_2 \\
-u & g_2 \\
 0 & -p
\end{matrix}
 \right].
\]
In fact, $M$ is the syzygy matrix of $[pu, pv, p_2]$: the sequence $\{pu, p_2\}$ is not regular iff the two polynomials share a common factor. If $u|p_2$, then let $p_2'= p_2+pv$; $u|p_2'$ or $p|p_2'$ imply $I_U$ is not basepoint free. So the depth of the ideal of $2 \times 2$ minors of $M$ is two and exactness follows from the Buchsbaum-Eisenbud criterion \cite{ebig}. Writing $p_3=f_3u+g_3v$, the syzygy module of $I_U$ contains the columns of $N=\spn \{L,S_1,S_2\},$ where 
\[
N = \left[\begin{matrix}
v & f_2 & f_3\\
-u & g_2 &g_3\\
 0 & -p  &0 \\
0 & 0 & -p
\end{matrix}
 \right].
\]
As the bottom $3 \times 3$ submatrix of $N$ is upper triangular, 
$\{L,S_1,S_2\}$ span a free $R$-module. The linear syzygy $L$ is
of bidegree $(0,1)$, so in the degree $\nu$ strand of the 
approximation complex it gives rise to  
\[
h^0(\OPP(2a-1,b-2)) = 2a(b-1)
\]
columns of the matrix of the first differential $d^1$.
The two syzygies $S_1,S_2$ of bidegree $(a,b-1)$ each give rise to 
\[
h^0(\OPP(a-1,0)) = a
\]
columns of the matrix of $d^1$. That the columns are independent follows
from the fact that $\{L,S_1,S_2\}$ span a free $R$-module. 
Hence, these syzygies yield $2ab$ columns the degree $\nu$ component 
of the matrix of $d^1$.
\end{proof}
For Theorem~\ref{MAIN} and Theorem~\ref{EXTRA} to hold, we need 
$a,b \ge 2$, even if $U$ is basepoint free. If either $a$ or $b$ is 
at most one, there can be additional minimal linear syzygies. For
example, if $(a,b)=(1,1)$, then there are four minimal linear 
first syzygies. However, it is easy to 
see that the theorems both hold if $L$ is of bidegree $(1,0)$.
\begin{cor}\label{EQN}
If $a,b \ge 2$, $U$ is basepoint free, and $I_U$ has a linear 
first syzygy, then the determinant of the degree $\nu = (2a-1,b-1)$ submatrix
of the first differential in the approximation complex is determined
by $\{L, S_1,S_2\}$.
\end{cor}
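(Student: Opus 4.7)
The plan is a short dimension count combining Theorem~\ref{EXTRA} with the Botbol lemmas. First I note that the rows of $(d^1)_\nu$ are indexed by a basis of $R_\nu = R_{(2a-1,b-1)}$, so the matrix has $\dim R_\nu = 2ab$ rows.

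Next, Theorem~\ref{EXTRA} already establishes that $\{L, S_1, S_2\}$ contribute exactly $2ab$ linearly independent columns to $(d^1)_\nu$, coming from the shifts
\[
R_{(2a-1,b-2)}\cdot L,\quad R_{(a-1,0)}\cdot S_1,\quad R_{(a-1,0)}\cdot S_2
\]
of dimensions $2a(b-1)$, $a$, and $a$ respectively. To show these are \emph{all} the columns, I would invoke Lemma~\ref{BLEM1}: since $U$ is basepoint free, $(H_2)_{4a-1,3b-1} = 0$, so the determinant of the $\nu$-strand of the approximation complex has degree exactly $2ab$. By the remark following Lemma~\ref{BLEM2}, under basepoint freeness this strand determinant coincides with $\det((d^1)_\nu)$, which forces $(d^1)_\nu$ to be a square $2ab\times 2ab$ matrix, i.e., $\dim \Syz(I_U)_\nu = 2ab$.

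Combining these two facts, the $2ab$ independent elements of $\Syz(I_U)_\nu$ contributed by $\{L,S_1,S_2\}$ form a basis of the whole space. Choosing this basis to index the columns of $(d^1)_\nu$, the determinant of the resulting square matrix is computed entirely from the three syzygies $\{L,S_1,S_2\}$, which is the content of the corollary. The only delicate point in this argument is the identification of the strand determinant with $\det((d^1)_\nu)$ in the basepoint-free case, but this is intrinsic to Botbol's framework \cite{bot} and is exactly what allows Lemma~\ref{BLEM1} to pin down the size of the matrix.
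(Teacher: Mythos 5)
Your argument is correct and follows the same route as the paper's (very terse) proof, which simply cites Lemma~\ref{BLEM1}, Lemma~\ref{BLEM2}, the surrounding remarks, and Theorem~\ref{EXTRA}. You spell out the dimension count that the paper leaves implicit: the matrix has $\dim R_\nu = 2ab$ rows; Theorem~\ref{EXTRA} gives $2ab$ independent columns from $\{L,S_1,S_2\}$; and Botbol's framework (basepoint-freeness kills $(H_2)_{4a-1,3b-1}$, so the strand determinant equals $\det\bigl((d^1)_\nu\bigr)$ and has degree $2ab$, forcing the matrix to be square of size $2ab$). The only minor awkwardness is the phrasing that the degree count ``forces'' squareness --- really, squareness of $(d^1)_\nu$ is already built into the remark following Lemma~\ref{BLEM2} (the equality with $\det$ presupposes a square matrix), and the degree then pins down the size; you acknowledge this dependence on Botbol's framework, so the reasoning stands.
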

\begin{proof}
This follows from Lemma~\ref{BLEM1}, Lemma~\ref{BLEM2}, the 
remarks preceding those lemmas, and Theorem~\ref{EXTRA}.
\end{proof}
\begin{cor}\label{SING}
If $a,b \ge 2$, $U$ is basepoint free, and $I_U$ has a linear 
first syzygy, then the singular locus of $X_U$ contains
a line.
\end{cor}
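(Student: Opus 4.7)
The plan is to combine the structural information from Lemma~\ref{LS1} with a fiber-length computation for the map $\phi_U$. By Lemma~\ref{LS1}, we may assume $I_U = \langle pu, pv, p_2, p_3\rangle$ with $p$ of bidegree $(a,b-1)$, so $\phi_U = (pu:pv:p_2:p_3)$. Let $L = \V(x_0,x_1)\subset \P^3$; since the first two coordinates of $\phi_U$ vanish on $\V(p)\subset \PP$, the image $\phi_U(\V(p))$ lies in $L$. To see this image is in fact all of $L$, note that if it were zero-dimensional then some nonzero combination $\alpha p_2 + \beta p_3$ would vanish on $\V(p)$, so $\alpha p_2 + \beta p_3 = pq$ for some $q=\gamma u+\delta v$ of bidegree $(0,1)$, yielding $\alpha p_2+\beta p_3 = \gamma p_0 + \delta p_1$ and contradicting the linear independence of $\{p_0,p_1,p_2,p_3\}$. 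Hence $L\subset X_U$.

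Next I would compute the length of a generic fiber of $\phi_U$ over $L$. Scheme-theoretically $\phi_U^{-1}(L)=\V(pu,pv)=\V(p)$, so the fiber over $P=(0:0:\lambda:\mu)$ is the subscheme $\V(p,\,\mu p_2-\lambda p_3)\subset\PP$. Because $\{p_0,\ldots,p_3\}$ have no common zero on $\PP$, the curves $\V(p)$ and $\V(\mu p_2-\lambda p_3)$ share no common component for generic $(\lambda,\mu)$ (otherwise the common factor would produce a basepoint), so Bezout on $\PP$ gives fiber length $(a,b-1)\cdot(a,b)=2ab-a$.

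Finally I compare this to $\deg \phi_U$. Since $\{p_i\}$ is a basis of the four-dimensional space $U$, $X_U$ is not contained in any hyperplane, so $\deg F\geq 2$, and by Lemma~\ref{BLEM2}, $\deg\phi_U=2ab/\deg F\leq ab$; for $b\geq 2$ this gives $\deg\phi_U<2ab-a$. If $X_U$ were smooth at a generic $P\in L$, then miracle flatness ($\PP$ is Cohen--Macaulay, $X_U$ is regular at $P$, fiber finite) would make $\phi_U$ flat of degree $\deg\phi_U$ in a neighborhood of $P$, forcing the fiber length to equal $\deg\phi_U$; this contradicts the computation above. Hence $X_U$ is singular along a dense open subset of $L$, and by closedness of $\Sing(X_U)$ we conclude $L\subset\Sing(X_U)$. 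A symmetric argument treats a linear syzygy of bidegree $(1,0)$. The main obstacle is the flatness step; the Bezout computation and the basepoint-freeness comparison with $\deg F$ are the two ingredients that make it work.
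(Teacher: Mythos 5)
Your argument is correct in its essentials but takes a genuinely different route from the paper. The paper's proof builds on Corollary~\ref{EQN} and Theorem~\ref{EXTRA}: it examines the block structure of the degree-$\nu$ strand of the approximation complex, Laplace-expands the determinant, and concludes that the implicit equation lies in $\langle x_0,x_1\rangle^{2ab-2a}$, which gives the stronger statement that $X_U$ has multiplicity at least $2ab-2a$ along the line. Your proof bypasses the approximation complex entirely: you only use Lemma~\ref{LS1}, Lemma~\ref{BLEM2}, the B\'ezout count on $\PP$, and miracle flatness. That makes your argument more self-contained and more geometric, at the cost of losing the explicit multiplicity bound (you only get that the line is in the singular locus). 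The flatness step is sound: $\PP$ is Cohen--Macaulay, and if $X_U$ were regular at a general $P\in L$ with finite fiber, $\phi_U$ would be finite flat over a connected neighborhood of $P$, forcing the local fiber length $2ab-a$ to equal $\deg\phi_U\leq ab$, which fails for $b\geq 2$.

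One step deserves repair. You argue that if $\phi_U(\V(p))$ were zero-dimensional then a single combination $\alpha p_2+\beta p_3$ would vanish on $\V(p)$. That conclusion only follows if the image is a single point; when $\V(p)$ has several components mapping to distinct points of $L$, no one linear combination need vanish on all of $\V(p)$, so the contradiction with linear independence of $\{p_0,\dots,p_3\}$ is not immediate. Fortunately the step is redundant: your own B\'ezout computation shows that for all but finitely many $(\lambda:\mu)$ the fiber $\V(p,\mu p_2-\lambda p_3)$ is nonempty (length $2ab-a>0$, once you observe that a common component for all $(\lambda:\mu)$ would force a common factor of $p,p_2,p_3$ and hence a basepoint). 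Since those fibers land in $L$ and the image of $\phi_U$ is closed, $L\subseteq X_U$ follows directly, and the rest of the argument goes through unchanged.
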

\begin{proof}
Let $I_U = \langle pu,pv,p_2,p_3\rangle$. 
By Corollary~\ref{EQN}, the matrix representing the 
degree $\nu$ component $d^1$ has as its leftmost $2a(b-1)$ 
columns a block matrix $P$. For each monomial $m_c = s^{2a-1-c}t^c$
with $c \in \{0,\ldots,2a-1\}$, there is a $b \times b-1$ block $B$ 
corresponding to elements $m_c\cdot\{v^{b-2},\ldots, u^{b-2}\} \cdot L$, with
$L = vx_0-ux_1$, hence
\[
B=\left[\begin{matrix}
x_0    & 0    &\hdots & \hdots &0\\
-x_1   &x_0   &0      & \vdots  & 0\\
\vdots &-x_1  &\ddots & \vdots  &0  \\
\vdots & 0   &x_0      &\ddots  &0  \\
\vdots & \vdots &\vdots&\vdots &0  \\
0 & 0           & 0    & -x_1  &x_0\\
0 & 0           & 0    & 0     &-x_1
\end{matrix}
\right], \mbox{ and }P=\left[\begin{matrix}
B    & 0    & \hdots &0\\
0    &B    &\ddots  & 0\\
0    &0  &\ddots & 0  \\
0 & \hdots          & 0    & B
\end{matrix}
\right].
\]
Computing the Laplace expansion of the determinant using the $2ab-2a$ minors
of $P$ shows the implicit equation for $X_U$ takes the form
\[
x_0^{2ab-2a}\cdot f_0 + x_0^{2ab-2a-1}x_1 \cdot f_1 + \cdots + x_1^{2ab-2a} \cdot f_{2ab-2a}.
\]
So $X_U$ is singular along $\V(x_0,x_1)$, with multiplicity at 
least $2ab-2a$.
\end{proof}
\begin{remark}The specific form of the implicit equation
given above means that it suffices to find the $f_i$, 
and speeds up the computation.
\end{remark}
\section{Application to the bidegree $(2,2)$ case}
We close with some examples in the bidegree $(2,2)$ case; 
without loss of generality we assume $I_U$ has a linear first syzygy of
bidegree $(0,1)$, so $I_U = \langle pu,pv,p_2,p_3\rangle$. Hence $p$ is 
of bidegree $(2,1)$. There are three possible factorizations for $p$:
\pagebreak

\begin{enumerate}
\item $p$ is irreducible.

\item $p$ is a product of an irreducible form of bidegree $(1,1)$, and a
form of bidegree $(1,0)$. So $p=ql$, where $q =a_0su+a_1sv+a_2tu+a_3tv$ and 
$l=b_0s+b_1t$. The locus of such forms is the image of the map 
\[
\P(H^0(\OPP(1,1))) \times \P(H^0(\OPP(1,0))) = \P^3 \times \P^1 \longrightarrow \P^5,
\]
$(a_0:a_1:a_2:a_3) \times (b_0:b_1) \mapsto 
(a_0b_0:a_0b_1+a_2b_0:a_2b_1:a_1b_0:a_1b_1+a_3b_0:a_3b_1)$, which
is a quartic hypersurface
\[
Q = \V({x}_{2}^{2} {x}_{3}^{2}-{x}_{1} {x}_{2} {x}_{3} {x}_{4}+{x}_{0} {x}_{2}
     {x}_{4}^{2}+{x}_{1}^{2} {x}_{3} {x}_{5}-2 {x}_{0} {x}_{2} {x}_{3} {x}_{5}-{x}_{0} {x}_{1} {x}_{4} {x}_{5}+{x}_{0}^{2} {x}_{5}^{2}).
\]
Note that $\Sigma_{2,1} \subseteq \V(Q)$.

\item $p$ is a product of three linear forms, two of bidegree $(1,0)$ and 
one of bidegree $(0,1)$. Then
identifying the coefficients of $p=a_0s^2u +a_1stu+a_2t^2u+a_3s^2v+a_4stv+a_5t^2v$ with a point of ${\mathbb P}^5$, 
such a decomposition corresponds to a point on the Segre variety $\Sigma_{2,1}$, whose ideal is defined by the two by two minors of 
\[
\left[\begin{matrix} x_0 & x_1 & x_2 \\ x_3 & x_4 &x_5 
\end{matrix}\right].
\]
\end{enumerate} 
Examples of possible bigraded betti tables for these three
cases appear below, where $p_2$ and $p_3$ are chosen generically.
When $p_2$ and $p_3$ are also nongeneric, there are many additional
possible types of betti table. It would be interesting to prove 
that the tables below are always the bigraded
betti tables for generic choices of $p_2$ and $p_3$, and to classify 
the bigraded resolutions which are possible in the $(2,2)$ case, and we 
are working on this. For brevity, we denote $R(a,b)$ by $(a,b)$. In all three 
cases, $X_U$ has degree $2ab=8$, in contrast to Example~\ref{ex1}.
\begin{exm}\label{genericCase}
Suppose $p \not\in \V(Q)$. After a change of coordinates, we may assume $p$ 
is the point $(1:0:0:0:0:1)$, which corresponds to $p=s^2u+t^2v$.
\[
0 \leftarrow I_U \longleftarrow (-2,-2)^4 \longleftarrow 
\begin{array}{c}
 (-2,-3)\\
\oplus \\
(-4,-3)^2\\
 \oplus \\
(-4,-4)\\
\oplus \\
(-3,-5)^2\\
\oplus \\
(-6,-3)\\
\oplus \\
(-8,-2)
 \end{array} 
\longleftarrow 
\begin{array}{c}
(-4,-5)^3\\
 \oplus \\
(-6,-4)^2\\
\oplus \\
(-8,-3)^2
 \end{array} 
\longleftarrow 
\begin{array}{c}
(-6,-5)\\
\oplus \\
(-8,-4)
 \end{array} 
\longleftarrow 0
\]
The reduced singular locus of $X_U$ consists of curves of degrees $1$, $2$, and $3$.
\end{exm}
\pagebreak

\begin{exm}\label{OnQ}
Suppose $p \in \V(Q) \setminus \Sigma_{2,1}$. 
After a change of coordinates, we may assume $p$ is the 
point $(1:2:1:1:1:0)$, which corresponds to $s^2u+2stu+t^2u+s^2v+stv$.
\[
0 \leftarrow I_U \longleftarrow (-2,-2)^4 \longleftarrow 
\begin{array}{c}
 (-2,-3)\\
\oplus \\
(-4,-3)^2\\
 \oplus \\
(-4,-4)\\
\oplus \\
(-3,-5)^2\\
\oplus \\
(-6,-3)\\
\oplus \\
(-7,-2)
 \end{array} 
\longleftarrow 
\begin{array}{c}
(-4,-5)^3\\
 \oplus \\
(-6,-4)^2\\
\oplus \\
(-7,-3)^2
 \end{array} 
\longleftarrow 
\begin{array}{c}
(-6,-5)\\
\oplus \\
(-7,-4)
 \end{array} 
\longleftarrow 0
\]
The reduced singular locus of $X_U$ consists of curves of degrees $1$, $1$, and $4$.
\end{exm}
\begin{exm}\label{genericCase}
Suppose $p \in \Sigma_{2,1}$. After a change of coordinates, we may assume $p$ is the 
point $(1:1:1:1:1:1)$, which corresponds to $s^2u+stu+t^2u+s^2v+stv+t^2v$.
\[
0 \leftarrow I_U \longleftarrow (-2,-2)^4 \longleftarrow 
\begin{array}{c}
 (-2,-3)\\
\oplus \\
(-4,-3)^2\\
 \oplus \\
(-4,-4)\\
\oplus \\
(-3,-5)^2\\
\oplus \\
(-6,-2)\\
 \end{array} 
\longleftarrow 
\begin{array}{c}
(-4,-5)^3\\
 \oplus \\
(-6,-4)\\
\oplus \\
(-6,-3)
 \end{array} 
\longleftarrow (-6,-5)
\longleftarrow 0
\]
The reduced singular locus of $X_U$ consists of curves of degrees $1$ and $4$.
\end{exm}

\noindent{\bf Acknowledgments} We thank an anonymous referee 
for a careful reading of the paper, and for helpful comments.
This work arose from a question asked by R. Vakil at the 2013
SIAM meeting on applied algebraic geometry, and we thank the
organizers of the session on toric geometry I. Soprunov and B. Nill. 
Evidence for this work was provided
by many computations done using {\tt Macaulay2}, by Dan
Grayson and Mike Stillman. {\tt Macaulay2} is freely available at 
\begin{verbatim}
http://www.math.uiuc.edu/Macaulay2/
\end{verbatim}
and scripts to perform the computations are available at 
\begin{verbatim}
http://www.math.uiuc.edu/~schenck/Syzscript
\end{verbatim}
\bibliographystyle{amsplain}

\begin{thebibliography}{10}
\bibitem{bot} {\scshape N.\ Botbol}, 
The implicit equation of a multigraded hypersurface,
\textit{J. Algebra} {\bf 348} (2011), 381-401.

\bibitem{bdd} {\scshape N.\ Botbol, A.\ Dickenstein, M.\ Dohm},
Matrix representations for toric parametrizations,
\textit{Comput. Aided Geom. Design} {\bf 26}  (2009),  757--771. 

\bibitem{bj} {\scshape L.\ Bus\'e, J.-P.\ Jouanolou},
On the closed image of a rational map and the implicitization problem,
\textit{J. Algebra} {\bf 265} (2003),  312--357. 

\bibitem{bc} {\scshape L.\ Bus\'e, M.\ Chardin},
Implicitizing rational hypersurfaces using approximation complexes,
\textit{J. Symbolic Computation} {\bf 40} (2005), 1150--1168. 

\bibitem{c} {\scshape M.\ Chardin},
Implicitization using approximation complexes, in 
``Algebraic geometry and geometric modeling", 
Math. Vis., Springer, Berlin (2006), 23--35. 

\bibitem{cox1} {\scshape D.\ Cox},
The moving curve ideal and the Rees algebra,
\textit{ Theoret. Comput. Sci.} {\bf 392} (2008), 23--36. 

\bibitem{cox2} {\scshape D.\ Cox}, 
Curves, surfaces and  syzygies, in 
``Topics in algebraic geometry and geometric modeling",
Contemp. Math. {\bf 334} (2003)  131--150.

\bibitem{cds} {\scshape D.\ Cox, A.\ Dickenstein, H.\ Schenck}, 
A case study in bigraded commutative algebra, in 
``Syzygies and Hilbert Functions", edited by Irena Peeva, 
Lecture notes in Pure and Applied Mathematics 254, (2007), 67--112.

\bibitem{cgz} {\scshape D.\ Cox, R. \ Goldman, M. \ Zhang}, On the
validity of implicitization by moving quadrics for rational surfaces
with no basepoints, \textit{J. Symbolic Computation} {\bf 29} (2000), 419--440.

\bibitem{ebig} {\scshape D.\ Eisenbud}, \textit{Commutative Algebra with
a view towards Algebraic Geometry}, Springer-Verlag,
Berlin-Heidelberg-New York, 1995.

\bibitem{harris} {\scshape J.\ Harris}, \textit{Algebraic Geometry, A
First Course}, Springer-Verlag, Berlin-Heidelberg-New York, 1992.

\bibitem{h} {\scshape R.\ Hartshorne}, \textit{Algebraic Geometry},
Springer-Verlag, Berlin-Heidelberg-New York, 1977.

\bibitem{hsv1} {\scshape J.\ Herzog, A.\ Simis, W.\ Vasconcelos}
Approximation complexes of blowing-up rings,
\textit{J. Algebra} {\bf 74} (1982), 466--493.

\bibitem{hsv2} {\scshape J.\ Herzog, A.\ Simis, W.\ Vasconcelos}
Approximation complexes of blowing-up rings II, 
\textit{J. Algebra} {\bf 82} (1983), 53--83.

\bibitem{ssv} {\scshape H.\ Schenck, A.\ Seceleanu, J.\ Validashti},
Syzygies and singularities of tensor product surfaces of bidegree $(2,1)$, 
\textit{Math.\ Comp.}, {\bf 83} (2014), 1337-1372. 

\bibitem{sc} {\scshape T.\ W.\ Sederberg, F.\ Chen}, Implicitization
using moving curves and surfaces, in {\sl Proceedings of
SIGGRAPH}, 1995, 301--308.

\bibitem{sgd} {\scshape T.\ W.\ Sederberg, R.\ N.\ Goldman and H.\ Du},
Implicitizing rational curves by the method of moving algebraic
curves, \textit{J.\ Symb.\ Comput.} {\bf 23} (1997), 153--175.

\bibitem{ssqk} {\scshape T.\ W.\ Sederberg, T.\ Saito,
D.\ Qi and K.\ S.\ Klimaszewksi}, Curve implicitization using
moving lines, \textit{Comput.\ Aided Geom.\ Des.} {\bf 11} (1994),
687--706.

\end{thebibliography}

\end{document}